\begin{document}


\theoremstyle{definition}
\newtheorem{corollary}{Corollary}
\newtheorem{definition}[corollary]{Definition}
\newtheorem{lemma}[corollary]{Lemma}
\newtheorem{notation}[corollary]{Notation}
\newtheorem{proposition}[corollary]{Proposition}
\newtheorem{theorem}[corollary]{Theorem}

\oddsidemargin 16.5mm
\evensidemargin 16.5mm

\vspace{5cc}
\begin{center}

{\large\bf  A CONNECTION BETWEEN THE $A_\alpha$-SPECTRUM AND LOVÁSZ THETA NUMBER
\rule{0mm}{6mm}\renewcommand{\thefootnote}{}
\footnotetext{\scriptsize ${}^{\ast}$Corresponding author. Gabriel Coutinho}
\footnotetext{\scriptsize 2020 Mathematics Subject Classification. 05C50, 90C22.

\rule{2.4mm}{0mm}Keywords and Phrases. Lovász theta function, $A_\alpha$-spectrum.}}

\vspace{1cc}
{\large\it Gabriel Coutinho${}^{\ast}$, Thiago Oliveira}

\vspace{1cc}
\parbox{24cc}{{\small

We show that the smallest $\alpha$ so that $\alpha D + (1-\alpha)A \succcurlyeq 0$ is at least $1/\vartheta(\ov{G})$, significantly improving upon a result due to Nikiforov and Rojo (2017). In fact, we display a stronger connection: if the nonzero entries of $A$ are allowed to vary and those of $D$ vary accordingly, then we show that this smallest $\alpha$ is in fact equal to $1/\vartheta(\ov{G})$. We also show other results obtained as an application of this optimization framework, including a connection to the well-known quadratic formulation for $\omega(G)$ due to Motzkin and Straus (1964). 

\vspace{0.5cm}

\begin{center}
\textsl{In memoriam of Prof. Vladimir Nikiforov.}
\end{center}

}}
\end{center}


\vspace{1.5cc}
\begin{section}
{Introduction}
\end{section}

Let $G$ be a simple graph, and define $A = A(G)$ to be its adjacent matrix, and $D = D(G)$ to be the diagonal matrix of its vertex degrees. For any $\alpha \in [0,1]$, let 
\[
	A_\alpha \coloneqq \alpha D + (1-\alpha)A.
\]
This parametrized family of matrices was introduced in \cite{Nikiforov17a} and has since received considerable attention (see for instance \cite{LiCM19a, ShuchaoW20a, NikiforovPRS17a, WangWT20a, XueLLS18a}).

In \cite{NikiforovR17a}, Nikiforov and Rojo defined $\alpha_0$ to be the smallest real number so that $A_\alpha$ is positive semidefinite, that is $A_\alpha \succcurlyeq 0$, for all $\alpha \geq \alpha_0$. They showed a closed expression for $\alpha_0$ when $G$ is regular depending only on the eigenvalues of $A$, characterized that $\alpha_0 = 1/2$ if and only if $G$ is bipartite, and showed that for all graphs $\alpha_0 \geq 1/\chi(G)$, where $\chi(G)$ stands for the chromatic number of $G$. Following their work, some articles \cite{BrondaniFO22a, BrondaniOFL19a} studied the value of $\alpha_0$ for some classes of graphs. 

In this short paper, we present a connection between $\alpha_0$ and $\vartheta(\ov{G})$, the Lovász theta number of $\ov{G}$, also known as the strict vector chromatic number of $G$. We recall the well known fact that $\vartheta(\ov{G}) \leq \chi(G)$ (see \cite{LovaszShanon}).

\vspace{1.5cc}
\begin{section}
{Semidefinite programming formulation} \label{sec:sdpfor}

\end{section}

In this section, we fix the graph $G$ with adjacency matrix $A$ and diagonal degree matrix $D$. Let $L = D-A$ be the Laplacian matrix of $G$. Recall that for two square matrices $M$ and $N$, the trace inner product is given by
\[
	\langle M,N \rangle = \tr M^T N.
\]
We use \(I\) and \(J\) to represent the identity and the all ones matrix of appropriate size, respectively.
Let $\Rds_+$ be the set of nonnegative reals.
The parameter $\alpha_0$ is the optimum of a semidefinite program with variable $\alpha \in \Rds_+$:
\begin{equation}
    \label{eq:alpha-def}
	\alpha_0 = \min \{ \alpha \in \Rds_+ : \alpha D + (1-\alpha)A \succcurlyeq 0.\}
\end{equation}
This was not explicitly stated in \cite{NikiforovR17a} but this is not an inconsequential fact: the optimum of semidefinite programs can be computed efficiently up to precision \cite{GroetschelLS81a, GroetschelLS93a}, thus, in principle, recovering $\alpha_0$ for a given graph is an easy task.
There is a plethora of efficient softwares available for semidefinite programs.
We highlight CSDP \cite{BorchersCSDP} and SeDuMi \cite{SturmSeDuMi}.

Analogously to the theory of linear programming, every semidefinite programming has a dual (see \cite[Section 4]{Helmberg02a}). The dual program to \eqref{eq:alpha-def} has variable $X \in \Sds_n$, a symmetric $n\times n$ matrix, and is given by
\begin{equation}
    \label{eq:dual}
	\sup \{\langle X,-A \rangle : X \succcurlyeq 0,\ \langle X,L\rangle \leq 
            1 \}.
\end{equation}
One can easily check that weak duality holds, that is, any objective value of \eqref{eq:dual} is upper bounded by the optimum value of \eqref{eq:alpha-def}:
\begin{align*}
	\langle X,-A \rangle & \leq \langle X, \alpha (D-A) \rangle \\
		& = \alpha \langle X, L\rangle \\
		& \leq \alpha.
\end{align*}
The first inequality follows from a result due to Schur (see \cite[Theorem 2.4]{Helmberg02a}), as both $X \succcurlyeq 0$ and $\alpha(D-A) + A \succcurlyeq 0$.

We remark here that if the graph \(G\) is disconnected, then the
matrices \(A, L\), and \(D\) have a block-diagonal decomposition and
the SDPs presented are equivalent to having an SDP for each
connected component.

For linear programs, it always follows that if both the primal and the dual programs are bounded, then their optima are the same. For semidefinite programs, this is achieved provided certain special condition holds, which we verify below.

\begin{proposition}
	The optimization problems in \eqref{eq:alpha-def} and \eqref{eq:dual} have
    the same optimal value.
\end{proposition}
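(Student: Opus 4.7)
The plan is to invoke the standard strong duality theorem for semidefinite programs. Weak duality between \eqref{eq:alpha-def} and \eqref{eq:dual} has already been verified in the excerpt, so what remains is to rule out a duality gap, and by the usual SDP theory this will follow as soon as Slater's condition is checked on one side of the primal--dual pair.

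Concretely, I would work on the dual side \eqref{eq:dual} and exhibit a strictly feasible point. Taking $X = \varepsilon I$ with $\varepsilon > 0$ we have $X \succ 0$ and
\[
    \langle X, L \rangle \;=\; \varepsilon \tr(L) \;=\; 2\varepsilon\,|E(G)|,
\]
which is strictly less than $1$ for all sufficiently small $\varepsilon$ (any $\varepsilon > 0$ works if $G$ has no edges at all). So \eqref{eq:dual} is strictly feasible in the interior of the PSD cone and in the interior of the linear constraint. On the primal side, boundedness and feasibility are immediate: $\alpha = 1$ gives $A_1 = D \succcurlyeq 0$, and the objective $\alpha$ is bounded below by $0$ by the constraint $\alpha \in \Rds_+$.

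At this point the standard SDP strong duality theorem (for instance \cite[Section 4]{Helmberg02a}) applies: strict feasibility of \eqref{eq:dual} together with finiteness of \eqref{eq:alpha-def} forces the two optima to coincide, and additionally guarantees that the infimum defining $\alpha_0$ is attained.

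The only point that requires any care is choosing the right side on which to check Slater's condition. The primal \eqref{eq:alpha-def} cannot be made strictly feasible in general: if $G$ has an isolated vertex, then $A_\alpha = \alpha D + (1-\alpha) A$ has a zero row and column for every $\alpha$, so it can never be positive definite. This is why I would verify Slater on the dual side, where the symmetric choice $X = \varepsilon I$ trivially sits in the interior of the feasible region. Beyond this observation there is no real obstacle, and the proposition reduces to a textbook invocation of SDP strong duality.
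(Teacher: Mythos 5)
Your proof is correct and follows essentially the same route as the paper: both exhibit a strictly feasible point of the form $\varepsilon I$ for the dual \eqref{eq:dual} (the paper takes $\varepsilon = 1/(2|E|+1)$), note finiteness via weak duality/primal feasibility, and invoke the standard SDP strong duality theorem. The only cosmetic difference is that the paper bounds the dual optimum directly by $\alpha_0$, while you obtain the same boundedness through the feasible primal point $\alpha = 1$.
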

\begin{proof}
    We will use \cite[Theorem 4.3]{Helmberg02a}.
    Let \(m\) be the number of edges of \(G\).
    The matrix
    \[X \coloneqq \frac{1}{2m + 1} \cdot I\] 
    is strictly feasible for \eqref{eq:dual}
    and the optimum value of \eqref{eq:dual} is upper bounded by \(\alpha_0\)
    by weak duality, so it is finite.
\end{proof}

The framework of semidefinite programming and the primal-dual formulations provide a systematic way in which bounds can be found, as we exemplify below.

\begin{proposition}
    \label{pro:degree-bound}
    Let \(G\) be a graph with at least one edge, minimum degree $\delta(G)$ and maximum degree $\Delta(G)$. Then
    \[
        \frac{-\lambda_{\min}(A)}{\Delta(G) - \lambda_{\min}(A)} \leq
        \alpha_0(G) \leq
        \frac{-\lambda_{\min}(A)}{\delta(G) - \lambda_{\min}(A)}.
    \]
    Consequently, if \(G\) is \(d\)-regular,
    \[
        \alpha_0(G) = \frac{-\lambda_{\min}(A)}{d - \lambda_{\min}(A)}.
    \]
\end{proposition}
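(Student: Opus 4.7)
The plan is to establish the two bounds separately by exhibiting primal and dual feasible points whose objective values match the stated expressions; strong duality, which the previous proposition provides, then closes the argument (although for the stated inequalities weak duality would already suffice).

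For the upper bound, I would construct a primal feasible $\alpha$. Since $D \succcurlyeq \delta(G) I$, for any $\alpha \in [0,1]$ we have
\[
    \alpha D + (1-\alpha) A \succcurlyeq \alpha \delta(G) I + (1-\alpha) A,
\]
and the right-hand side is positive semidefinite precisely when $\alpha \delta(G) + (1-\alpha)\lambda_{\min}(A) \geq 0$. Solving this linear inequality for $\alpha$ (using $\lambda_{\min}(A) < 0$, which holds since $G$ has at least one edge) yields $\alpha \geq -\lambda_{\min}(A)/(\delta(G) - \lambda_{\min}(A))$; this value is thus feasible for \eqref{eq:alpha-def}, giving the upper bound.

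For the lower bound, I would construct a dual feasible matrix in \eqref{eq:dual}. Let $v$ be a unit eigenvector of $A$ associated to $\lambda_{\min}(A)$ and set $X = c\,v v^T$ with the scalar $c > 0$ to be chosen so that the Laplacian constraint is tight. Since $D \preccurlyeq \Delta(G) I$, one has $v^T L v = v^T D v - \lambda_{\min}(A) \leq \Delta(G) - \lambda_{\min}(A)$, so choosing $c = 1/(\Delta(G) - \lambda_{\min}(A))$ makes $X$ feasible for \eqref{eq:dual}. The objective value is then
\[
    \langle X, -A\rangle = -c\, v^T A v = \frac{-\lambda_{\min}(A)}{\Delta(G) - \lambda_{\min}(A)},
\]
and by (weak) duality this is a lower bound on $\alpha_0(G)$. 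The regular case is immediate since the two bounds collapse when $\delta(G) = \Delta(G) = d$.

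I do not expect a serious obstacle here; the bounds are tight by design once one realizes that the only way $D$ interacts with the primal constraint through a scalar inequality is via sandwiching it between $\delta(G) I$ and $\Delta(G) I$. The one small point to be careful about is ensuring $\lambda_{\min}(A) < 0$ so that all signs and divisions go the right way, which is why the hypothesis that $G$ has at least one edge is included in the statement.
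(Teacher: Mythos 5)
Your proposal is correct and follows essentially the same route as the paper: the upper bound via $D \succcurlyeq \delta(G) I$ reducing feasibility to the scalar inequality $\alpha\delta(G) + (1-\alpha)\lambda_{\min}(A) \geq 0$, and the lower bound via the dual feasible rank-one matrix $v v^T/(\Delta(G)-\lambda_{\min}(A))$ built from a $\lambda_{\min}$-eigenvector, combined with weak duality. No gaps.
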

\begin{proof}
    We start by proving the upper bound.
    We claim that
    \(\alpha \coloneqq \tfrac{-\lambda_{\min}(A)}
    {\delta(G) - \lambda_{\min}(A)}\) is feasible for \eqref{eq:alpha-def}.
    To show that \(\alpha D + (1 - \alpha)A \succcurlyeq 0\), we will
    use the fact that \(D \succcurlyeq \delta(G)I\).
    Thus, it is sufficient to prove \(\alpha \delta(G) I + (1 - \alpha) A
    \succcurlyeq 0\), or equivalently, \(\alpha \delta(G) \geq
    \lambda_{\max}((\alpha - 1)A)\).
    Note that
    \begin{align*}
        \alpha \delta(G) &=
        \frac{-\lambda_{\min}(A) \delta(G)}{\delta(G) - \lambda_{\min}(A)}\\
        &=
       \frac{-\delta(G)}{\delta(G) - \lambda_{\min}(A)} \cdot
        \lambda_{\min}(A) \\
        &=
        (\alpha - 1)\lambda_{\min}(A)
    \end{align*}
    and since \((\alpha - 1) \leq 0\), we have
    \((\alpha - 1)\lambda_{\min}(A) = \lambda_{\max}((\alpha - 1)A)\).

    We now prove the lower bound by exhibiting a feasible solution to the 
    dual problem \eqref{eq:dual}. Let \(v\) be a unit \(\lambda_{\min}(A)\)-eigenvector of \(A\).
    We can verify that
    \[
    \langle v^{}v^{T} , L \rangle =
    \langle v^{}v^{T} , D - A \rangle \leq
    \Delta(G) - \lambda_{\min}(A).
    \]
    Therefore, we have that
    \[
    Y \coloneqq \frac{v^{} v^{T}}
    {\Delta(G) - \lambda_{\min}(A)}
    \]
    is feasible for \eqref{eq:dual} and its objective value is
    \(\tfrac{-\lambda_{\min}(A)}{\Delta(G) - \lambda_{\min}(A)}\).
\end{proof}

\vspace{1.5cc}
\begin{section}
{Connection to the Lovász theta number}
\end{section}

In \cite{LovaszShanon}, Lovász introduced a parameter $\vartheta(G)$ which, for a fixed graph, is the optimum of a semidefinite program. We use $M\circ N$ to denote the entry-wise product of two matrices of the same size, and use $\ov{A}$ for the adjacency matrix of the complement graph $A(\ov{G})$.

The primal and dual formulations for $\vartheta$ are as follows
\begin{equation}
	\vartheta(G) = \max \{\langle X,J\rangle : X \circ A = 0,\ \langle X , I \rangle =1,\ X \succcurlyeq 0\}, \label{eq:varthetamax}
\end{equation}
and
\begin{equation}
	\vartheta(G) = \min \{ \lambda : \lambda I + Z - J \succcurlyeq 0,\ Z \circ (I + \ov{A}) = 0 \}.
\end{equation}
Lovász showed that $\vartheta(\ov{G}) \leq \chi(G)$, thus the following result is a strengthening of \cite[Theorem 10]{NikiforovR17a} and \cite[Corollary 11]{NikiforovR17a}.

\begin{theorem} \label{thm:easy}
    Let \(G\) be a graph with $n$ vertices and $m$ edges, $m > 0$. Then
    \[\lambda_{\min} (A_\alpha) \leq \frac{2m}{n} \cdot\frac{\vartheta(\ov{G}) \alpha - 1}{\vartheta(\ov{G}) - 1},\]
    and therefore
	\[\alpha_0 \geq \frac{1}{\vartheta(\ov{G})}.\]
\end{theorem}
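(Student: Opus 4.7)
The plan is to construct a single PSD matrix \(Y\) from an optimal dual solution of the Lovász SDP for \(\vartheta(\ov{G})\), and then apply the standard Rayleigh-type bound
\[
\lambda_{\min}(M) \leq \frac{\langle Y, M\rangle}{\tr Y},
\]
valid for any symmetric \(M\) and any \(Y \succcurlyeq 0\) with \(\tr Y > 0\); this follows at once by decomposing \(Y\) as a nonnegative combination of rank-one projectors and observing that the right-hand side is then a weighted average of Rayleigh quotients.

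The key step is the choice of \(Y\). Invoking the dual formulation of \(\vartheta(\ov{G})\) stated just before the theorem, there exists a symmetric \(Z^*\) with \(Z^* \circ (I + A) = 0\) such that \(Y \coloneqq \vartheta(\ov{G})\, I + Z^* - J \succcurlyeq 0\). The constraint on \(Z^*\) means \(Z^*\) vanishes on the diagonal and at every edge of \(G\), so \(Y_{ii} = \vartheta(\ov{G}) - 1\) for every \(i\) and \(Y_{ij} = -1\) whenever \(ij \in E(G)\). All the freedom of \(Z^*\) is hidden in the non-edge entries of \(Y\), and those entries never contribute to the inner products of \(Y\) with \(A\) or \(D\).

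From the rigid diagonal of \(Y\) one reads \(\tr Y = n(\vartheta(\ov{G}) - 1)\) and \(\langle Y, D\rangle = (\vartheta(\ov{G}) - 1)\sum_i d_i = 2m\,(\vartheta(\ov{G}) - 1)\), while the rigid edge entries give \(\langle Y, A\rangle = -2m\). Substituting into \(\langle Y, A_\alpha\rangle = \alpha\,\langle Y, D\rangle + (1-\alpha)\,\langle Y, A\rangle\) and simplifying yields \(\langle Y, A_\alpha\rangle = 2m\,(\alpha\vartheta(\ov{G}) - 1)\). Dividing by \(\tr Y\) and applying the Rayleigh bound reproduces exactly the claimed inequality for \(\lambda_{\min}(A_\alpha)\); the denominator \(\vartheta(\ov{G}) - 1\) is positive because \(m > 0\) forces \(\vartheta(\ov{G}) \geq \omega(G) \geq 2\).

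For the second inequality, it suffices to specialize \(\alpha = \alpha_0\): by definition \(A_{\alpha_0} \succcurlyeq 0\), so \(\lambda_{\min}(A_{\alpha_0}) \geq 0\), and the bound just established gives \(\alpha_0 \vartheta(\ov{G}) \geq 1\). I do not anticipate any real obstacle beyond the choice of \(Y\); the only conceptual content is noticing that the dual Lovász SDP delivers, essentially for free, a PSD certificate whose diagonal and edge-indexed entries are entirely pinned down by \(\vartheta(\ov{G})\), which is precisely the information needed to pair against \(D\) and \(A\).
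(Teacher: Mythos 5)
Your proposal is correct and follows essentially the same route as the paper: you use the very same certificate $\vartheta(\ov{G})I + Z - J \succcurlyeq 0$ from the dual Lovász SDP, merely phrasing the eigenvalue step as the Rayleigh-type bound $\lambda_{\min}(A_\alpha) \leq \langle Y, A_\alpha\rangle/\tr Y$ rather than via the trace-normalized SDP dual for $\lambda_{\min}$, and computing with $\alpha D + (1-\alpha)A$ directly instead of $\alpha L + A$. These are only cosmetic differences (your use of $\lambda_{\min}(A_{\alpha_0}) \geq 0$ in place of $=0$ is, if anything, marginally cleaner).
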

\begin{proof}
	By definition of the smallest eigenvalue, once $\alpha$ is fixed, 
	\begin{equation}
	\lambda_{\min} (A_\alpha) = \max \{ \mu \in \Rds: A_\alpha - \mu I  \succcurlyeq 0\}. \label{eigenprimal}
	\end{equation}
	This too is a semidefinite program, whose dual is given by
	\begin{equation}
	\lambda_{\min} (A_\alpha) = \min \{ \langle X , A_\alpha \rangle : X \succcurlyeq 0, \langle X , I \rangle = 1\}. \label{eigendual}
	\end{equation}
    For more about the relation between eigenvalue optimization and semidefinite program, see  \cite[Section 6]{Helmberg02a}.
	Let $(\lambda,Z)$ be an optimum for the $\min$ formulation of $\vartheta(\ov{G})$, in particular $\lambda = \vartheta(\ov{G})$, $Z \circ (I + A) = 0$, and $(\vartheta(\ov{G})\cdot I + Z - J) \succcurlyeq 0$. Choose
	\[
		X = \frac{1}{n (\vartheta(\ov{G}) - 1)} \big(\vartheta(\ov{G})\cdot I + Z - J \big).
	\]
	Because $G$ has at least one edge, $\vartheta(\ov{G}) \geq 2$, so $X$ is well-defined, $X \succcurlyeq 0$, and it is immediate to check that $\langle X , I \rangle = 1$.
	
	For the objective value, recall that $\langle I,A\rangle = 0$ because $A$ has trace $0$, $\langle J , L \rangle = 0$ because the all ones vector is a $0$-eigenvector of $L$, and also $\langle Z , A_\alpha \rangle = 0$ because these matrices have disjoint support. Then
	\begin{align*}
		\langle X , A_\alpha \rangle & = \left\langle \frac{1}{n (\vartheta(\ov{G}) - 1)} \big(\vartheta(\ov{G})\cdot I + Z - J \big) , \alpha L + A \right\rangle \\
		& = \frac{1}{n (\vartheta(\ov{G}) - 1)} \big( \vartheta(\ov{G}) \langle I, \alpha L\rangle - \langle J,A\rangle \big) \\
		& = \frac{1}{n (\vartheta(\ov{G}) - 1)} \big( 2m \vartheta(\ov{G}) \alpha - 2m  \big), 
	\end{align*}
	as we wanted. For the second part, if $\alpha = \alpha_0$, then $\lambda_{\min} (A_\alpha) = 0$, and a simple reorganization implies the desired inequality.
\end{proof}

\vspace{1.5cc}
\begin{section}
{The parameter $\alpha_0$ for weighted graphs}
\end{section}

The previous section motivates the question of how $\alpha_0$ behaves if the matrices $D$ and $A$ have weighted entries. We can show that this optimized version of $\alpha_0$ will coincide precisely with $1 / \vartheta(\ov{G})$.

Define
\sdp{\widetilde{\alpha_0} \coloneqq \qquad\qquad \min }{\alpha \nonumber}{Y,Z \in \Sds_n,\ \alpha \in \Rds, \\ & \alpha Y + (1-\alpha)Z \succcurlyeq 0, \label{eq1}\\ & Y \circ I = Y, \label{eq:I}\\ & Z \circ A = Z, \label{eq:A}\\ & \langle J,Y \rangle = 1, \label{eq:norm}\\ & \langle J , Y-Z\rangle = 0.\label{eq:lapl}}
Equations \eqref{eq:I} and \eqref{eq:A} say that $Y$ is diagonal and $Z$ is nonzero only at entries corresponding to edges of $G$. Equation \eqref{eq:norm} is a normalization. Note that this is fine as \eqref{eq1} is scaling invariant, but adds the benefit of preventing the trivial situation in which $Y=Z=0$. Finally \eqref{eq:lapl} is analogous to the fact that $\langle J,D-A\rangle = 0$, and a weakening of the fact that the entries of $D$ are the row-sums of $A$. In a sense, one can see $\widetilde{\alpha_0}$ as the smallest possible $\alpha_0$ for a nonzero weighted subgraph of $G$.

The semidefinite programming framework allows for an immediate proof of the following theorem.

\begin{theorem} \label{thm:main}
    Let \(G\) be a graph with at least one edge. Then
	\[
		\widetilde{\alpha_0} = \frac{1}{\vartheta(\ov{G})}.
	\]
\end{theorem}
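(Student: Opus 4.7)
The plan is to prove both inequalities by direct conversions between feasible solutions of the SDP defining $\widetilde{\alpha_0}$ and the primal SDP \eqref{eq:varthetamax} for $\vartheta(\overline{G})$. The key observation is that the constraints match up naturally: $Y$ being diagonal and $Z$ being supported on $E(G)$ precisely mean that $Y + Z$ is zero off $I + A$, which is exactly the constraint $X \circ \overline{A} = 0$ in \eqref{eq:varthetamax} (since $\overline{A}$ is supported on non-edges with zero diagonal).

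For the inequality $\widetilde{\alpha_0} \leq 1/\vartheta(\overline{G})$, I would take an optimal $X$ for \eqref{eq:varthetamax} with $G$ replaced by $\overline{G}$, so $X \succcurlyeq 0$, $\langle X, I\rangle = 1$, $\langle X, J\rangle = \vartheta(\overline{G})$, and $X_{ij}=0$ for non-adjacent $i \neq j$. Set
\[
    Y \coloneqq X \circ I,\qquad Z \coloneqq \frac{X \circ A}{\vartheta(\overline{G})-1},\qquad \alpha \coloneqq \frac{1}{\vartheta(\overline{G})}.
\]
Then \eqref{eq:I} and \eqref{eq:A} are immediate, $\langle J, Y\rangle = \langle I, X\rangle = 1$, and $\langle J, Z\rangle = (\vartheta(\overline{G})-1)/(\vartheta(\overline{G})-1) = 1$, giving \eqref{eq:norm} and \eqref{eq:lapl}. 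A quick calculation shows that $\alpha Y + (1-\alpha)Z = \frac{1}{\vartheta(\overline{G})}\,X \succcurlyeq 0$, establishing \eqref{eq1}. The hypothesis that $G$ has an edge is what guarantees $\vartheta(\overline{G}) > 1$ so the denominator in $Z$ is nonzero.

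For the reverse inequality, given any feasible triple $(\alpha, Y, Z)$ for the program defining $\widetilde{\alpha_0}$ (with $\alpha > 0$, which must hold whenever the program has an edge-supported solution), I would build
\[
    X \coloneqq \frac{1}{\alpha}\bigl(\alpha Y + (1-\alpha)Z\bigr) = Y + \tfrac{1-\alpha}{\alpha}Z.
\]
Then $X \succcurlyeq 0$ by \eqref{eq1}, and $X$ is zero outside of the diagonal and $E(G)$, so $X \circ \overline{A} = 0$. Since $Z$ has zero diagonal by \eqref{eq:A}, $\langle X, I\rangle = \langle Y, I\rangle = \langle Y, J\rangle = 1$ by \eqref{eq:norm}. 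Finally, using \eqref{eq:lapl} (which gives $\langle J, Z\rangle = \langle J, Y\rangle = 1$),
\[
    \langle X, J\rangle = \langle Y, J\rangle + \tfrac{1-\alpha}{\alpha}\langle Z, J\rangle = 1 + \tfrac{1-\alpha}{\alpha} = \tfrac{1}{\alpha}.
\]
Thus $\vartheta(\overline{G}) \geq 1/\alpha$, hence $\alpha \geq 1/\vartheta(\overline{G})$, and taking the infimum finishes the proof.

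I do not expect a serious obstacle. The only delicate point is handling the case $\alpha = 0$ in the lower-bound direction; this is ruled out because then \eqref{eq1} forces $Z \succcurlyeq 0$ with support on $E(G)$, which is impossible for a nonzero symmetric matrix (a PSD matrix supported off the diagonal must be zero), contradicting $\langle J, Z\rangle = 1$. Up to this small check, the two constructions are essentially inverse to each other via the identifications $Y \leftrightarrow X\circ I$ and $Z \leftrightarrow (X\circ A)/(\vartheta(\overline{G})-1)$, which is what makes the equality clean.
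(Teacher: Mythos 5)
Your argument is correct in both directions and is essentially the paper's own proof: each direction converts feasible solutions between the program for $\widetilde{\alpha_0}$ and a formulation of $\vartheta(\ov{G})$, with the combination $\alpha Y + (1-\alpha)Z$ serving as the bridge one way and the splitting of $X$ into its diagonal and edge-supported parts the other way; the only real difference is bookkeeping, since the paper works with the rescaled program \eqref{eq:1var} (normalized by $\langle X,J\rangle = 1$) while you work with \eqref{eq:varthetamax} for $\ov{G}$ and rescale $Z$ by $1/(\vartheta(\ov{G})-1)$. One small hole remains in your lower-bound direction: the program allows $\alpha \in \mathbb{R}$, your division by $\alpha$ covers $\alpha > 0$, and your remark disposes of $\alpha = 0$, but $\alpha < 0$ is never excluded. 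It is a one-line patch: since $Z$ has zero diagonal by \eqref{eq:A}, the diagonal of $\alpha Y + (1-\alpha)Z$ is $\alpha\,\mathrm{diag}(Y)$, so \eqref{eq1} forces $\alpha Y_{ii} \geq 0$ for every $i$, and summing and using \eqref{eq:norm} gives $\alpha \geq 0$. It is worth noting that the paper's version avoids the case split altogether by not dividing: it feeds $W = \alpha Y + (1-\alpha)Z$ directly into \eqref{eq:1var}, where its objective value is $\langle W, I\rangle = \alpha$, so every feasible $\alpha$ is at least $1/\vartheta(\ov{G})$ regardless of sign. Your variant has the compensating (minor) merit that it bounds every feasible $\alpha$ without assuming the optimum of the $\widetilde{\alpha_0}$ program is attained, an assumption the paper makes implicitly when it takes ``optima'' $\alpha', Y', Z'$.
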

\begin{proof}

A simple renormalization of \eqref{eq:varthetamax}, taken for the complement graph, leads to the program

\begin{equation}
	\frac{1}{\vartheta(\ov{G})} = \min \{\langle X,I\rangle : X \circ \ov{A} = 0,\ \langle X , J \rangle =1,\ X \succcurlyeq 0\}. \label{eq:1var}
\end{equation}

	Now let $\alpha',Y',Z'$ be optima for the formulation of $\widetilde{\alpha_0}$, and make $W = \alpha' Y' + (1-\alpha') Z'$. Note that
	\[
		W \succcurlyeq 0, \quad  W \circ \ov{A} = 0 , \quad\text{ and }\quad \langle J,W \rangle = 1,
	\]
	thus $W$ is feasible for \eqref{eq:1var} and its objective value is $\widetilde{\alpha_0}$. 
	
	On the other hand, if $X$ is an optimum for \eqref{eq:1var}, note that $\langle X,I\rangle + \langle X,A\rangle = 1$ and $0 < \langle X ,I \rangle \leq 1/2$, and thus by making
	\[\alpha = \frac{1}{\vartheta(\ov{G})} ,\quad Y = \frac{X \circ I}{\langle X,I\rangle} , \quad \text{and}\quad Z = \frac{X \circ A}{\langle X, A \rangle},\]
	one obtains a feasible solution for the program of $\widetilde{\alpha_0}$ whose objective value is precisely $1/\vartheta(\ov{G})$.
\end{proof}

Note that the second statement in Theorem \ref{thm:easy} is an immediate consequence of Theorem \ref{thm:main}.

\vspace{1.5cc}
\begin{section}
{The parameter \(\alpha_0\) and cuts}
\end{section}

We now apply some of this technology to relate $\alpha_0$ and cuts of the graph~\(G\).
We first prove a resulting relating the minimum eigenvalue of \(A_\alpha\)
to the optimal of a semidefinite relaxation for max-\(k\)-cut and use this
result to derive bounds on \(\alpha_0\).

Consider the SDP relaxation for max-\(k\)-cut by Frieze and Jerrum \cite{FriezeJmaxkcut}:

\sdp{\qquad\qquad \max }{\frac{k - 1}{2k} \langle L, Y \rangle \label{eq:maxkcut}}{Y \succcurlyeq 0 \nonumber\\ & Y_{ii} = 1 \qquad\qquad \, \text{for every \(i \in V\)}, \nonumber\\ & Y_{ij} \geq \frac{-1}{k - 1} \qquad \text{for every \(i,j \in V\)}. \nonumber}

\begin{proposition}
\label{pro:maxkcut}
    Let \(G\) be a graph with \(n\) vertices and \(m\) edges. Let \(\alpha \in [0, 1]\). Let \(\beta^*\) be the optimal value of (\ref{eq:maxkcut}). Then
    \[
    \lambda_{\min}(A_\alpha) \leq \frac{2m}{n} - \frac{(1 - \alpha)}{n} \cdot\frac{2k}{k - 1} \beta^*.
    \]
\end{proposition}
\begin{proof}
    Let \(Y^*\) be an optimal solution for (\ref{eq:maxkcut}).
    As noted in (\ref{eigendual}), we can write \(\lambda_{\min}(A_\alpha)\) as an SDP.
    Set \(X^* \coloneqq Y^* / n\).
    We have that \(X^*\) is positive semidefinite and \(\langle X^*, I \rangle = 1\). Thus, \(X^*\) is feasible for (\ref{eigendual}).
    Using the fact that \(A_\alpha = D - (1 - \alpha) L\), one can compute
    \begin{align*}
        \lambda_{\min}(A_\alpha) 
        &\leq
        \langle A_\alpha, X^* \rangle \\
        &=
        \langle D, X^* \rangle - (1 - \alpha)\langle L, X^* \rangle \\
        &=
        \frac{1}{n}\langle D, Y^* \rangle - \frac{(1 - \alpha)}{n}
        \langle L, Y^* \rangle \\
        &=
        \frac{2m}{n} - \frac{(1 - \alpha)}{n} \cdot\frac{2k}{k - 1} \beta^*. &&\qedhere\\
    \end{align*}
\end{proof}

\begin{corollary}
\label{col:maxkcut}
    Let \(G\) be a graph with \(n\) vertices and \(m\) edges, and let \(\mathrm{MC}_k(G)\) be the value of a maximum
    \(k\)-cut of \(G\). Then
    \[
    \lambda_{\min}(A_\alpha) \leq \frac{2m}{n} - \frac{(1 - \alpha)}{n} \cdot\frac{2k}{k - 1} \mathrm{MC}_k(G),
    \]
    and consequently
    \[
    \alpha_0(G) \geq 1 - \frac{(k - 1)}{k} \frac{m}{\mathrm{MC}_k(G)}.
    \]
\end{corollary}
\begin{proof}
    The first part comes directly from the fact that (\ref{eq:maxkcut}) is
    a relaxation for the maximum \(k\)-cut and so \( \beta^* \geq \mathrm{MC}_k(G)\).
    The second equality holds because when \(\alpha = \alpha_0\), we have
    \(A_\alpha\) is positive semidefinite and so \(\lambda_{\min}(A_\alpha) \geq 0\).
\end{proof}

When \(k = 2\), the SDP in (\ref{eq:maxkcut}) is equivalent to the celebrated max-cut relaxation by Goemans and Williamson~\cite{GoemansW95a}.
This immediately gives an alternate proof of \cite[Proposition~29]{Nikiforov17a}.

\begin{corollary}
    Let \(G\) be a graph with \(n\) vertices and \(m\) edges, and let 
    \(\mathrm{MC}_2(G)\) be the value of a maximum cut of \(G\). Then
    \[
    \lambda_{\min}(A_\alpha) \leq \frac{2m}{n} - \frac{4(1 - \alpha)}{n} \cdot \mathrm{MC}_2(G).
    \]
\end{corollary}

From Corollary \ref{col:maxkcut}, we also obtain

\begin{corollary}
    Let \(G\) be a graph with \(n\) vertices and \(m\) edges, and let 
    \(\mathrm{MC}_2(G)\) be the value of a maximum cut of \(G\). Then
    \[
     \alpha_0(G) \geq 1 - \frac{m}{2\mathrm{MC}_2(G)}.
    \]
\end{corollary}

Finally, we can also derive a alternative proof for \cite[Corollary 11]{NikiforovR17a}.
Suppose \(k = \chi(G)\).
Then \(\mathrm{MC}_k(G) = m\), and,
by Corollary (\ref{col:maxkcut}), we obtain
\[
\alpha_0(G) \geq \frac{1}{\chi(G)}.
\]

\vspace{1.5cc}
\begin{section}
{Copositive formulation}
\end{section}

The well known sandwich theorem proved by Lovász in \cite{LovaszShanon} states that
\[
	\omega(G) \leq \vartheta(\ov{G}) \leq \chi(G).
\]
In light of our result earlier, it is therefore natural to wonder whether ${\alpha_0 \geq 1/\omega}$.
However, this is not true. By Proposition 6 of \cite{NikiforovR17a}, we have that for any triangle-free non-bipartite graph, \(C_5\) for example, it holds that \(\alpha_0 < 1/\omega\).
Inspired by the formulation of $\omega(G)$ in terms of copositive programming (see~\cite{KlerkPasechnikIndepCopProgram}), we introduce below a variant of $\alpha_0$ for which we can prove the inequality of~$\omega(G)$.

Recall that \(\Sds_n\) is the set of symmetric matrices \(n \times n\).
Define the set of copositive matrices as \[\mathcal{C}_n \coloneqq \{X \in \Ss_n \colon \qform{X}{h} \geq 0 \text{ for all } h \geq 0\}.\]
Note that \(\Cc_n\) is a matrix cone that contains the cone of positive semidefinite matrices (see \cite{KlerkPasechnikIndepCopProgram}). Inspired by the formulation \eqref{eq:dual} for \(\alpha_0\), we define
\begin{equation}
    \label{eq:copalpha}
	\alpha_0^{\Cc} \coloneqq \max \{\langle X,-A \rangle : X \in \Cc_n,\ \langle X,L\rangle \leq 
            1 \}.
\end{equation}
Because every positive semidefinite matrix is also copositive we have
 the trivial inequality \(\alpha_0~\leq~\alpha_0^{\Cc}\).

Nikiforov \cite{Nikiforov17a} proved that \(\alpha_0 \leq 1/2\).
We show that this bound also holds for \(\alpha_0^\Cc\), and in addition we show that \(\alpha_0^\Cc\) is lower bounded by $1/\omega$.

\begin{theorem}
    Let \(G = (V, E)\) be a graph with $n$ vertices and $m$ edges, $m>0$. Let $\omega$ be the clique number of $G$. Then
    \[\frac{1}{\omega(G)} \leq \alpha_0^\Cc \leq \frac{1}{2}\]
\end{theorem}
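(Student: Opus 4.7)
\medskip

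\noindent\textbf{Plan of proof.} The plan is to handle the two inequalities separately: the upper bound will come from a short calculation exploiting the definition of copositivity on the edge-indicator vectors, and the lower bound will come from plugging the de~Klerk--Pasechnik copositive certificate for $\omega(G)$ into the dual-style program \eqref{eq:copalpha}.

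For the upper bound $\alpha_0^{\Cc} \leq 1/2$, I would start from a feasible $X \in \Cc_n$ and test copositivity against $e_i + e_j$ for each edge $ij \in E$, which yields $X_{ii} + 2X_{ij} + X_{jj} \geq 0$. Summing over all edges rearranges to
\[
\sum_i \deg(i)\, X_{ii} + 2\sum_{ij \in E} X_{ij} \;\geq\; 0,
\]
which is precisely $\langle X, D + A\rangle \geq 0$, equivalently $\langle X, -A\rangle \leq \langle X, D\rangle$. Adding $\langle X,-A\rangle$ to both sides gives $2\langle X, -A\rangle \leq \langle X, D - A\rangle = \langle X, L\rangle \leq 1$, and the upper bound follows.

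For the lower bound $\alpha_0^{\Cc} \geq 1/\omega(G)$, I would exhibit an explicit feasible copositive matrix using the de~Klerk--Pasechnik formulation from \cite{KlerkPasechnikIndepCopProgram}, namely that
\[
M \;\coloneqq\; \omega(G)\,(I + \ov{A}) - J \;\in\; \Cc_n.
\]
A direct computation (using that $M_{ii} = \omega(G)-1$, $M_{ij}=-1$ for $ij\in E$, and $\sum_i \deg(i) = 2|E|$) gives
\[
\langle M, A\rangle = -2|E|, \qquad \langle M, D\rangle = (\omega(G)-1)\cdot 2|E|, \qquad \langle M, L\rangle = 2\omega(G)\,|E|.
\]
Since $G$ has at least one edge we have $\omega(G) \geq 2$ and hence $\langle M, L\rangle > 0$, so the rescaling $X \coloneqq M/\bigl(2\omega(G)|E|\bigr)$ is feasible for \eqref{eq:copalpha} and achieves objective value $\langle X, -A\rangle = 1/\omega(G)$.

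\medskip

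\noindent The only real obstacle is identifying the right copositive certificate for the lower bound; once one recalls the de~Klerk--Pasechnik matrix $\omega(I+\ov{A})-J$ the rest is bookkeeping. The upper bound is essentially a one-line observation once one notices that copositivity gives control of $\langle X, D+A\rangle$ through the edge vectors $e_i + e_j$.
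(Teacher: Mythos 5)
Your proposal is correct and follows essentially the same route as the paper: your lower-bound certificate $\omega(I+\ov{A})-J$ is just $\omega$ times the paper's Motzkin--Straus matrix $\ov{A}+I-\tfrac{1}{\omega}J$, and after rescaling you obtain the identical feasible point for \eqref{eq:copalpha}. Your upper-bound argument, summing $\qform{X}{(e_i+e_j)}\geq 0$ over edges, is exactly the paper's use of the signless Laplacian $Q=\sum_{ij\in E}(e_i+e_j)(e_i+e_j)^\transp$, only written out entrywise instead of as $\langle Q,X\rangle\geq 0$.
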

\begin{proof}
	A well-known result due to Motzkin-Strauss \cite{MotzkinS65a} states that
	\[
		\frac{1}{\omega} = \min \{\qform{(\ov{A} + I)}{x} \colon x \in \Rds^n, x \geq 0, \1^\T x = 1. \}.
	\]
	An immediate consequence (also observed in \cite[Corollary 2.4]{KlerkPasechnikIndepCopProgram}) is that if
	\[
		Y \coloneqq \ov{A} + I - \frac{1}{\omega} J,
	\] 
	then $Y$ is copositive. Upon defining
	\[
		X \coloneqq \frac{1}{2m} Y,
	\]
	it is straightforward to verify that $\langle L , X \rangle = 1$, and therefore $X$ is feasible for \eqref{eq:copalpha}, and its objective value is
	\[
		\langle -A , X \rangle = \frac{1}{2m}\cdot \frac{2m}{\omega} = \frac{1}{\omega}.
	\] 
	
    For the other inequality, define the signless Laplacian \(Q\) of \(G\) as
    \[Q \coloneqq \sum_{ij \in E} (e_i + e_j)^{} (e_i + e_j)^\transp\]
    and note that we can write \(\tfrac{1}{2}Q = \tfrac{1}{2} L + A\).
    Now suppose that \(X\) is feasible for the optimization problem from
    \eqref{eq:copalpha}. From the definition of the set of copositive matrices,
    we have that \(\langle \tfrac{1}{2} Q, X \rangle \geq 0\). Hence \(\langle \tfrac{1}{2} L, X \rangle \geq
    \langle X, -A \rangle\).
    Therefore, if \(X\) is an optimal solution for \eqref{eq:copalpha} we obtain
    \[\langle X, -A \rangle \leq \langle \tfrac{1}{2} L, X \rangle \leq \frac{1}{2}.\]
\end{proof}

We do not believe $1/\omega$ is equal to the optimal value of \eqref{eq:copalpha}. This optimum value might be an interesting new graph parameter, and we leave the open problem of computing its value in terms of other graph parameters, or obtaining better bounds to it.

\vspace{1.5cc}
\begin{section}
{Acknowledgements}
\end{section}

We acknowledge conversations about this problem with Marcel K. de Carli Silva, Levent Tunçel, Vinicius Santos, Carla Oliveira, and Pedro Cipriano.

Part of this work was conducted during LAWCG 2022 in Curitiba. We acknowledge the financial support from CAPES-PROEX to attend this event. Thiago Oliveira received a CAPES Master's scholarship. Gabriel Coutinho gratefully acknowledges the support of CNPq and FAPEMIG.

\noindent
{\bf Gabriel Coutinho}\\
Dept. of Computer Science,\\
Federal University of Minas Gerais,\\
Belo Horizonte, Brazil\\
E-mail: {\it gabriel@dcc.ufmg.br}

\vspace{0.1cc}

\noindent
{\bf Thiago Oliveira}\\
Dept. of Computer Science,\\
University of São Paulo,\\
São Paulo, Brazil\\
E-mail: {\it thilio@ime.usp.br}

\end{document}